\newcommand{\rrvert}{\vert}
\newcommand{\llvert}{\vert}
\newcommand{\1}{\mathds{1}}
\newcommand{\almsur}{~\text{a.s.}}
\newcommand{\de}{\,d}
\newcommand{\E}{\mathrm{E}}
\renewcommand{\ge}{\geqslant}
\newcommand{\Law}{\mathbb{P}}
\renewcommand{\le}{\leqslant}
\newcommand{\mbR}{{\mathbb R}}
\newcommand{\mbN}{{\mathbb N}}
\newcommand{\mx}{\vee}
\newcommand{\Pb}{\mathrm{P}}
\newcommand{\R}{\mathbb{R}}
\newcommand{\sign}{\mathop{\mathrm{sign}}}
\renewcommand{\tanh}{\mathop{\mathrm{th}}}
\newcommand{\tmpdelta}{\delta}
\newcommand{\tmpnu}{\nu}
\newcommand{\ve}{\varepsilon}
\newcommand{\vf}{\varphi}
\newcommand{\wta}{\widetilde{a}}
\newcommand{\xin}[1][n]{\xi^{(#1)}}
\newcommand{\xn}[1][n]{X_{#1}}
\theoremstyle{plain}\newtheorem{theorem}{Theorem}
\newtheorem{lemma}{Lemma}
\theoremstyle{definition}\newtheorem{definition}{Definition}
\theoremstyle{remark}\newtheorem{remark}{Remark}
\begin{document}
\begin{frontmatter}

\title{A limit theorem for singular stochastic differential~equations}





\author[a,b]{\inits{A.}\fnm{Andrey}\snm{Pilipenko}\corref{cor1}}\email{apilip@imath.kiev.ua}
\cortext[cor1]{Corresponding author.}
\address[a]{Institute of Mathematics of NAS of Ukraine,
Department of Stochastic Processes,
Tereschenkivska st., 3, 01601 Kyiv, Ukraine}

\author[b]{\inits{Yu.}\fnm{Yuriy}\snm{Prykhodko}}\email{prykhodko@matan.kpi.ua}
\address[b]{National Technical University of Ukraine <<KPI>>,
Department of Mathematical Analysis and Probability Theory,
Peremohy ave., 37, 03056 Kyiv, Ukraine}


\markboth{A. Pilipenko, Yu. Prykhodko}{A limit theorem for
singular stochastic differential equations}

\begin{abstract}
We study the weak limits of solutions to SDEs
\[
dX_n(t)=a_n\bigl(X_n(t)\bigr)\,dt+dW(t),
\]
where the sequence $\{a_n\}$ converges in some sense to
$(c_- \1_{x<0}+c_+ \1_{x>0})/x+\gamma\delta_0$. Here $\delta_0$ is the Dirac delta
function concentrated at zero. A limit of $\{X_n\}$ may be a Bessel
process, a skew Bessel process, or a mixture of Bessel processes.
\end{abstract}


\begin{keyword}
Bessel process\sep skew Bessel process\sep limit theorems
\MSC[2010] 60F17\sep60J60
\end{keyword}

\received{19 September 2016}
%
\revised{23 October 2016}
%
\accepted{23 October 2016}
\publishedonline{8 November 2016}
\end{frontmatter}

\section{Introduction}

Consider the stochastic differential equation
\begin{equation}
\label{eq:main-sde} \de\xn[](t) = a\bigl(\xn[](t)\bigr)\de t + \de W(t),\quad t\ge0,
\end{equation}
where $a$ is a locally integrable function.

The aim of this paper is to study convergence in distribution of the
sequence of processes
$\{\xn[](nt)/\sqrt{n},\ t\ge0\}$ as $n\to\infty$.\vadjust{\eject}

Observe that
\[
\de\xn(t) = \sqrt{n}a\bigl(\sqrt{n}\xn(t)\bigr)\de t + \de W_n(t),\quad
t\ge0,
\]
where $W_n(t)=W(nt)/\sqrt{n}$, $t\ge0$ is a Wiener process, %
and $\xn(t)=\xn[](nt)/\sqrt{n}$, $t\ge0$%
.

Hence, to study the sequence $\{\xn[](nt)/\sqrt{n}\}$,
it suffices to investigate the SDEs
\[
\de\xn(t) = a_n\bigl(\xn(t)\bigr)\de t + \de W(t),\quad t\ge0,
\]
where $a_n(x)=na(nx)$.

If $a\in L_1(\R)$, then $a_n$ converges in generalized sense to $\alpha
\delta_0$, where $\delta_0$ is the Dirac delta function at zero,
where $\alpha=\int_\R a(x)\,dx$. It is well known that in this case the sequence
$\{\xn\}$ converges weakly to a skew Brownian motion
with parameter
$\gamma= \tanh(\alpha) = \frac{e^\alpha- e^{-\alpha}}{e^\alpha+
e^{-\alpha}}$; see, for example, \cite{Portenko1976,Lejay2006}. Recall
that \cite{HarrisonShepp1981,Lejay2006} the skew Brownian motion
$W_\gamma(t)$
with parameter $\gamma$, $|\gamma|\le1$, is a unique (strong) solution
to the SDE
\[
\de W_\gamma(t) = \de W(t) + \gamma\de L_{W_\gamma}^0(t),
\]
where $L_{W_\gamma}^0(t)=\lim_{\ve\to0+}(2\ve)^{-1}\int_0^t\1_{|W_\gamma
(s)|\le\ve}\de s$ is the local time of the process $W_\gamma$ at 0.
The process $W_\gamma$ is a continuous
Markov process with transition probability density function
$
p_{t}(x,y) = \varphi_t (x{-}y) + \gamma\sign(y)\,\varphi_t
(|x|{+}|y|)$, $x,y\in\mathbb{R}$,
where $\varphi_t(x) = \frac{1}{\sqrt{2\pi t}} e^{-x^2/2t}$ is the
density of the normal distribution $N(0,t)$. Note also that $W_\gamma$
can be obtained from excursions of a Wiener process pointing them
(independently of each other) up and down with probabilities
$p=(1+\gamma)/2$ and $q=(1-\gamma)/2$, respectively.

Kulinich et al. \cite{KulinichKaskun1997,KulinichKushnirenkoMishura2014} considered limit theorems in the case
where $a$ is nonintegrable function such that
\begin{equation}
\label{eqKulKas} \lim_{x\to\pm\infty} \frac{1}{x} \int
_{0}^{x} \big|v a(v)-c_\pm\big|\de v = 0,\quad\big|x
a(x)\big|\le C,
\end{equation}
where $c_\pm> -1/2$ are constants. In this case, $a_n(x)$ converges in
some sense to $c_-\1_{x<0}+c_+\1_{x\ge0}$ as $n\to\infty$.

For instance, if
$
a(x) = c_\pm/ x \text{ for } {\pm x} > x_0$,
then, for
$c_-<1/2<c_+$,
the sequence~$\xn$
converges weakly to a Bessel process.
If $c_-=c_+>-1/2$, then $|\xn|$ also converges weakly to a Bessel process.
The problem of weak convergence of~$\xn$ for (e.g.) $c_-=c_+>-1/2$ or
$c_-<c_+\le1/2$ was not considered.

In this paper, we generalize the results of \cite
{Portenko1976,KulinichKaskun1997} to the case
\[
a(x) = \wta(x) + \frac{\bar{c}(x)}{x}, \quad x\in\R,
\]
where
$\wta$ is 
integrable on $(-\infty;\infty)$,
and
\[
\bar{c}(x) = c_+\cdot\1_{x>1} + c_-\cdot\1_{x<-1}, \quad x\in\R.
\]

We consider all possible limit processes (depending on $c_+$ and $c_-$).
In particular, we show that, for
$c_+ = c_- < 1/2$,
the limit process is a skew Bessel process
(see Section~\ref{sec:sbp}).

\section{Bessel process. Skew Bessel process. Definition, properties}
\label{sec:sbp}

We recall the definition and some properties of Bessel processes.

Let $\tmpdelta\ge0$ and $x_0\in\R$. Consider the SDE
\begin{equation}
\label{eq:def-bessel-sq} Z\bigl(x_0^2,t\bigr) = x_0^2
+ 2\int_0^t \sqrt{\big|Z
\bigl(x_0^2,s\bigr)\big|} \de W(s) + \tmpdelta t,\quad t\ge0,
\end{equation}
where $W$ is a Wiener process.

It is known (see \cite{RevuzYor1999}, XI.1, (1.1)),
that
there exists a unique strong solution
$Z(x_0^2,\cdot)$ of~\eqref{eq:def-bessel-sq}. This solution is called
the squared
$\tmpdelta$-dimensional Bessel process. The process $Z(x_0^2,\cdot)$
is nonnegative a.s.

\begin{definition}
The process $B_{c}(x_0,t) = \sqrt{Z(x_0^2,t)}$ with $x_0\ge0$ is called
the (nonnegative) \emph{Bessel process}
with parameter $c=({\tmpdelta{-}1})/{2}$.

We will call the process $ B^-_{c}(x_0,t)= -B_{c}(x_0,t) = -\sqrt
{Z(x_0^2,t)}$ with $x_0\le0$ the nonpositive Bessel process.
\end{definition}

Recall the following properties of the Bessel process (see \cite
[Chap.~XI]{RevuzYor1999}).

The Bessel process $\xi(t) = B_c(x_0,t)$ satisfies the SDE
\[
\de\xi_t = \de W_t + \frac{c}{\xi_t}\de t,\quad
t<T_0,
\]
where $T_0$ is the first hitting time of 0.
If $\tmpdelta\ge2$ (i.e., $c\ge1/2$), then the Bessel process with
probability~1 does not hit 0.

If $0<\tmpdelta<2$ (\xch{i.e.,}{i.e.} $-1/2<c<1/2$), then with probability~1 the
Bessel process hits 0 but spends zero time at 0.
In particular, if $\tmpdelta=1$ (i.e., $ c=0$), then the Bessel process
is a reflecting Brownian motion.

If $\tmpdelta=0$ (i.e., $c=-1/2$), then with probability~1 the process
attains 0 and remains there forever.

The scale function of the Bessel process $B_c$ equals
\begin{equation}
\label{eq_scale1} \psi_c(x)= %
\begin{cases}
-x^{-2c+1} &\text{if }\ c>1/2,\\
\ln x &\text{if }\ c=1/2,\\
x^{-2c+1} &\text{if }\ c<1/2,
\end{cases} %
\end{equation}
that is,
\[
P_x(T_a<T_b)=\frac{\psi_c(b)-\psi_c(x)}{\psi_c(b)-\psi_c(a)}\quad \text{
for any }\ 0<a<x<b,
\]
where $T_y=\inf\{t\ge0 : \ B_c(t)=y\}$.

The transition density for $c>-1/2$, $x, y>0$, and $t>0$ equals
%
\[
p_t^c(x,y) = t^{-1}(y/x)^{\tmpnu} y \exp
\bigl(-\bigl(x^2+y^2\bigr)/2t\bigr) I_\tmpnu(xy/t),
\]
where $I_\tmpnu$ is a Bessel function of index
$\tmpnu= c-1/2$.

Let $c\in(-1/2,1/2)$,
and let
$p_t^{0,c}(x,y)$
be the transition density of the Bessel process $B_c$ killed at 0.

Set
\begin{align*}
p_t^{\mathit{skew}}(x,y)& = p_t^{0,c}\big(|x|,|y|\big)
\cdot\1_{xy>0}
\\
&\quad + \frac{1+\gamma\sign y}{2} \bigl(p^c_t\big(|x|,|y|\big)-p_t^{0,c}\big(|x|,|y|\big)
\bigr),\quad x,y\in\R.
\end{align*}
It is easy to
verify
that this function satisfies
the Chapman--Kolmogorov equation,
is nonnegative,
and
$\int_{\R} p_t^{\mathit{skew}}(x,y) \de y = 1,\ x\in\R$.

\begin{definition}
A time-homogeneous Markov process with the transition density
$p_t^{\mathit{skew}}$
is called \emph{the skew Bessel process} $B_{c,\gamma}^{\mathit{skew}}$
with parameters~$c$ and~$\gamma\in[-1,1]$.
\end{definition}

\begin{remark}
We do not consider the skew Bessel process for $c\ge1/2$ because
$B_c(x_0,\cdot)$ does not hit 0 if $x_0\neq0$.
\end{remark}

\begin{remark}\label{RemSk_Bess}
The skew Bessel process
$B^{\mathit{skew}}$ can be obtained from a nonnegative Bessel process
by pointing its excursions
up with probability $p=\tfrac{1+\gamma}{2}$
and down with probability $q=\tfrac{1-\gamma}{2}$, similarly to the
case of a skew Brownian motion; see arguments in \cite
{BarlowPitmanYor1989}, Section~2.

Thus, the scale function of the skew Bessel process equals
\begin{equation}
\label{eq_scale2} \psi_{\mathit{skew}}(x) = (q\1_{x\ge0}-p\1_{x< 0})
|x|^{-2c+1} ,\quad x\in\R.
\end{equation}
For other properties of the skew Bessel process, we refer to \cite{Blei2012}.
\end{remark}

\begin{remark}
If $x_0>0$ and $p=1$ (i.e., $\gamma=1$), then $B_{c,\gamma
}^{\mathit{skew}}(x_0,\cdot)$ is 
a (nonnegative)
Bessel process $B_{c}(x_0,\cdot)$ with parameter $c$:
$B_{c,1}^{\mathit{skew}}(x_0,\cdot) \stackrel{d}{=} B_{c}(x_0,\cdot)$.

Also, the absolute value of the skew Bessel process
$|B_{c,\gamma}^{\mathit{skew}}|$ is 
a (nonnegative) Bessel process 
$B_{c}(x_0,\cdot)$ with parameter $c$:
$|B_{c,\gamma}^{\mathit{skew}}(x_0,\cdot)| \stackrel{d}{=} B_{c}(x_0,\cdot)$.

If $c=0$, then $B_{c,\gamma}^{\mathit{skew}}$ is a skew Brownian motion:
$B_{0,\gamma}^{\mathit{skew}}(\cdot) \stackrel{d}{=} W_{\gamma}(\cdot)$.
\end{remark}

\section{Main results}

Let
\[
a(x) = \wta(x) + \frac{\bar{c}(x)}{x}, \quad x\in\R,
\]
where $\wta\in L_1(\R)$ and
\[
\bar{c}(x) = c_+\cdot\1_{x>1} + c_-\cdot\1_{x<-1}, \quad x\in\R.
\]

Let $\xn(t), t\ge0$, be the solution of the SDE
\[
\begin{cases}
\de\xn(t) = n a\big(n\xn(t)\big)\de t + \de W(t) \\
\qquad\qquad\ =  \bigg(n \wta\big(n\xn(t)\big)+\dfrac{\bar{c}\big(n\xn(t)\big)}{\xn
(t)} \bigg)\de t+ \de W(t), \quad t\ge0,\\
\xn(0) = x_0.
\end{cases} %
\]
The existence and uniqueness of a strong solution of this SDE follows
from \cite[Thm.~4.53]{EngelbertSchmidt1991}.

%
\begin{theorem}\label{thm:main-theorem}
If $c_+
\text{ and }
c_- > -1/2$, then the sequence of processes
$\{\xn\}$
converges weakly
to a limit process $\xn[\infty]$. In particular:
\begin{enumerate}
\renewcommand{\theenumi}{A\arabic{enumi}}
\renewcommand{\theenumii}{(\alph{enumii})}
\renewcommand{\labelenumii}{\theenumii}

\item\label{case:positive}

If
\begin{enumerate}
\item[\rm(a)]\label{case:positive-a} $x_0>0$ and $c_+\ge1/2$, or
\item[\rm(b)]\label{case:positive-b} $x_0\ge0$ and $c_-<c_+<1/2$, or
\item[\rm(c)]\label{case:positive-c} $x_0=0$ and $c_-<1/2\le c_+$,
\end{enumerate}
then
\[
\xn[\infty](t) = B^+_{c_+}(x_0,t),\quad t\ge0.
\]

\item\label{case:negative}

Similarly, if
\begin{enumerate}
\item[\rm(a)]\label{case:negative-a} $x_0<0$ and $c_-\ge1/2$, or
\item[\rm(b)]\label{case:negative-b} $x_0\le0$ and $c_+<c_-<1/2$, or
\item[\rm(c)]\label{case:negative-c} $x_0=0$ and $c_+<1/2\le c_-$,
\end{enumerate}
then
\[
\xn[\infty](t) = B^-_{c_-}(x_0,t),\quad t\ge0.
\]

\item\label{case:positive1}
If
$x_0<0$, $c_-<1/2$, and $c_-<c_+$,
then the limiting process evolves as~$B_{c_-}^-$ until hitting 0 and
then proceeds as $B^+_{c_+}$
indefinitely, that is,
\[
\xn[\infty](t) = B^-_{c_-}(x_0,t) \cdot\1_{t\le\tau} +
B^+_{c_+}(0,t-\tau) \cdot\1_{t>\tau},\quad t\ge0,
\]
where $\tau= \inf\{t\colon\xn[\infty](t)\ge0\}$
and
$B^\pm_{c_\pm}$
are independent \textup{(}positive and negative\textup{)}
Bessel processes.

\item\label{case:negative1}

Similarly, if
$x_0>0$, $c_+<1/2$, and $c_+<c_-$,
then
\[
\xn[\infty](t) = B^+_{c_+}(x_0,t) \cdot\1_{t\le\tau} +
B^-_{c_-}(0,t-\tau) \cdot\1_{t>\tau},\quad t\ge0,
\]
where $\tau= \inf\{t\colon\xn[\infty](t)\le0\}$.

\item\label{case:skew}

If $c_+=c_- =: c <1/2$, then, for any $x_0$,
\[
\xn[\infty](t) = B_{c,\gamma}^{\mathit{skew}} (x_0,t),\quad t\ge0,
\]
where
$
\gamma= \tanh(\int_{-\infty}^{+\infty} \wta(z) \de z)
{}=
\frac{1-\exp\{-2\int_{-\infty}^{+\infty} \wta(z) \de z\}}%
{1+\exp\{-2\int_{-\infty}^{+\infty} \wta(z) \de z\}}
$.

\item\label{case:random}

Finally, if $x_0=0$,
$c_+\ge1/2$, and $c_-\ge1/2$,
then the distribution of the limit process~$\xn[\infty]$ equals
\[
p\cdot\Law_{B^+_{c_+}} + (1-p)\cdot\Law_{B^-_{c_-}},
\]
where
\begin{equation}
\label{eq_p} p = \frac{
\int_{0}^{\infty}
A(-y) (y\mx1)^{-2c_-} \de y
}{
\int_{0}^{\infty}  (
A(-y) (y\mx1)^{-2c_-} +
A(y) (y\mx1)^{-2c_+}
 ) \de y},
\end{equation}
$A(y) = \exp\{-2\int_{0}^{y}\wta(z)\de z\}$,
and $\Law_{ B^\pm_{c_\pm}}$ are the distributions of positive and
negative Bessel processes $ B^\pm_{c_\pm}(0,\cdot)$ starting from $0$.
\end{enumerate}
\end{theorem}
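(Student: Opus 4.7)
I would work throughout with the scale function. For the SDE $\de\xn = a_n(\xn)\de t + \de W$, the scale function is $S_n(x) = \int_0^x \exp(-2\int_0^{ny} a(u)\de u)\de y = n^{-1}S(nx)$, where $S$ is the scale function of the unrescaled SDE $\de Y = a(Y)\de t + \de W$. Using $a = \wta + \bar c/\cdot$ with $\wta \in L_1(\R)$, a direct computation gives $S'(v) = A(v)(|v|\mx1)^{-2c_\pm}$ for $v\gtrless 0$, with $A(v) = \exp(-2\int_0^v\wta(z)\de z)$ bounded and admitting finite limits $A(\pm\infty)$. Consequently $S$ is bounded at $\pm\infty$ if $c_\pm > 1/2$ and diverges as $A(\pm\infty)v|v|^{-2c_\pm}/(1-2c_\pm)$ if $c_\pm < 1/2$; these asymptotics will drive every case.

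First I would prove tightness of $\{\xn\}$ in $C([0,\infty),\R)$. The process $Y_n := S_n(\xn)$ is a continuous local martingale with $\langle Y_n\rangle_t = \int_0^t(S_n'(\xn(s)))^2\de s$; treating separately the microscopic region $\{|\xn|\le K/n\}$ (where a rescaling to $n\xn$ reduces matters to the unrescaled diffusion $Y$) and the macroscopic region $\{|\xn|>K/n\}$ (where $n\wta(n\xn)=0$ and $\bar c(n\xn)/\xn = c_\pm/\xn$ gives standard Bessel-type estimates), one obtains uniform-in-$n$ bounds on $\langle Y_n\rangle_t$, hence tightness of $Y_n$. Combined with locally uniform off-zero convergence of the suitably renormalized scale, this yields tightness of $\{\xn\}$.

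For identification I proceed case by case. In the subcases of \ref{case:positive} and \ref{case:negative} with $x_0$ strictly of one sign and $c_\pm\ge 1/2$, the Bessel drift prevents $\xn$ from hitting $0$, so $a_n$ converges locally uniformly to $c_\pm/x$ and stability of SDE solutions under drift perturbation gives convergence to $B^\pm_{c_\pm}(x_0,\cdot)$. All remaining subcases of \ref{case:positive} and \ref{case:negative}, together with \ref{case:positive1} and \ref{case:negative1}, reduce via the strong Markov property at the first hitting time of $0$ to the assertion that when $S$ is unbounded on only one side the limit started at $0$ is pushed almost surely to that side. In case \ref{case:skew} one has $c_+=c_-=c<1/2$, the renormalized scale $n^{2c}S_n(x)$ converges off $0$ to $A(\pm\infty)x|x|^{-2c}/(1-2c)$, and a Portenko--Lejay-type argument identifies the limit of $Y_n$ as a skew Brownian motion with parameter $\gamma = (A(-\infty)-A(\infty))/(A(-\infty)+A(\infty)) = \tanh(\int_\R\wta(z)\de z)$; inverting the scale yields the skew Bessel process $B^{\mathit{skew}}_{c,\gamma}$. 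Finally, in case \ref{case:random}, both half-lines are transient, the limit leaves $0$ immediately and never returns, and the probability of leaving to $+\infty$ equals $(S(0)-S(-\infty))/(S(\infty)-S(-\infty))$, which via the asymptotics of $S'$ is exactly the expression \eqref{eq_p}.

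The main obstacle is case \ref{case:skew}: one must show that the singular Bessel drift $c/x$ and the integrable perturbation $\wta$ decouple in the limit, with $\wta$ contributing only the skew at the origin and $c/x$ contributing only the Bessel power-law away from it. This combines excursion theory with careful scale-function comparison, adapting Portenko's skew Brownian result to the Bessel regime. Case \ref{case:random} similarly requires a delicate argument that the microscopic behaviour of $\xn$ near $0$, where $\wta$ is felt, selects the macroscopic side with the prescribed probability $p$ determined by the scale function of the unrescaled equation.
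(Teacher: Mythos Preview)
Your route via the full scale function $S_n$ is genuinely different from the paper's. The paper does \emph{not} transform by the scale; instead it removes only the integrable part of the drift via $\varPhi_n(x)=\int_0^x A(ny)\,dy$, which is uniformly bi-Lipschitz in $n$, and then invokes a general Markov-process lemma (from the authors' earlier work) that reduces weak convergence to five verifiable conditions: modulus of continuity, convergence of processes stopped before hitting a neighbourhood of $0$, convergence of the exit point from small intervals $[-\alpha,\alpha]$, and---crucially---a uniform bound $\sup_n \E\int_0^T\1_{|X_n|\le\varepsilon}\,dt\to 0$. The last condition is the technical heart of the proof and is established by a comparison argument with a reflected SDE and an explicit Green-function estimate.

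Your proposal has a real gap exactly here. Tightness of $Y_n=S_n(X_n)$ does not transfer to tightness of $X_n$, because $S_n^{-1}$ is not uniformly Lipschitz: $S_n'(x)=A(nx)(|nx|\vee 1)^{-2c_\pm}$ is of order $(n|x|)^{-2c_\pm}$ for $|x|>1/n$, so for $c_\pm>0$ it is small and the inverse has large derivative. Conversely, if you renormalise and work with $n^{2c}S_n(X_n)$ to get a nondegenerate limit of the scale, that object is no longer a martingale when $c\neq 0$, so your quadratic-variation bound does not apply. Either way you still need, as an independent input, exactly the uniform occupation-time estimate near $0$ that the paper isolates and proves; without it you can neither rule out that limit points spend positive time at $0$, nor pass from convergence of the scale-transformed process back to $X_n$.

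Your identification in case A5 also relies on a ``Portenko--Lejay-type argument'' that is not available off the shelf: those results produce skew Brownian motion from an integrable drift, i.e., the case $c=0$. For $c\neq 0$ the limit scale is the nonlinear map $x\mapsto\mathrm{const}\cdot\sign(x)|x|^{1-2c}$, and the claim that the martingale $Y_n$ converges to skew Brownian motion (so that inverting gives skew Bessel) is precisely what has to be proved; you have restated the difficulty rather than resolved it. The paper sidesteps this by computing the exit probabilities $p^n_x=\Pb_x(X_n(\sigma^{n,\alpha})=\alpha)$ directly from the scale formula $p^n_x=(\varphi(nx)-\varphi(-n\alpha))/(\varphi(n\alpha)-\varphi(-n\alpha))$ and reading off the limit from the asymptotics of $\varphi$, which together with the other lemma conditions suffices.
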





\begin{remark}
Some results of the theorem follow from \cite{KulinichKaskun1997}.
However, we apply here the general approach applicable to all cases
simultaneously. Condition~\eqref{eqKulKas} is somewhat weaker than
$\tilde a\in L_1(\R)$. However, we do not assume that $\sup_x|x\tilde
a(x)|<\infty$, contrary to the paper
\cite{KulinichKaskun1997}.
\end{remark}

\section{Proof}

It follows from \cite[Section~3]{LeGall1983} or \cite
[Section~3.7]{Makhno2012} that if \ref{case:positive-a} is satisfied,
then, for any $\alpha>0$, we have the convergence
\[
X_n\bigl(\cdot\wedge\tau^{n,\alpha}\bigr)\quad\Rightarrow\quad
B^+_{c_+}\bigl(x_0,\cdot\wedge \tau^{0,\alpha}\bigr),\quad n
\to\infty,
\]
where
$ \tau^{n,\alpha}=\inf \{t\ge0 \colon
X_n(t) \le\alpha \}$ and $\tau^{0,\alpha}=\inf \{t\ge0 \colon
B^+_{c_+}(x_0,t) \le\alpha \}$. Since the process
$B^+_{c_+}(x_0,\cdot)$ does not hit 0, this yields the proof. Case~\ref
{case:negative-a} is considered similarly.

To prove all other items of Theorem~\ref{thm:main-theorem}, we use the
method proposed in~\cite{PilipenkoPrykhodko2015}.

Let $\{\xin,n\ge0\}$ be a sequence of continuous homogeneous strong
Markov processes.
For $\alpha>0$, set
\[
\tau^{n, \alpha} := \inf \bigl\{t\ge0 \colon \big|\xin(t)\big| \le\alpha \bigr\},
\qquad \sigma^{n, \alpha} := \inf \bigl\{t\ge0 \colon \big|\xin(t)\big| \ge\alpha \bigr
\}.
\]

We denote by $\xin_{x_0}$ a process that has the distribution of $\xin$
conditioned by\linebreak$\xin(0)=x_0$.

The next statement is a particular 
case of Theorem 2 of~\cite{PilipenkoPrykhodko2015}.

\begin{lemma}
Assume that the sequence $\{\xin,n\ge0\}$ satisfies the following
conditions\textup{:}
\begin{align}
\label{cond:nach_usl} &\xin(0) \quad\Rightarrow\quad\xin[0](0);\\
&\forall T>0\ \forall\varepsilon>0\ \exists\delta>0\ \exists n_0\
\forall n\ge n_0\notag\\
&\quad\label{cond:modulus} \Pb \Bigl(\sup_{\substack{|s-t|<\delta, \\ s,t\in[0,T]}} \big|\xin(t) - \xin (s)\big| \ge
\varepsilon \Bigr) \le\varepsilon;\\
\label{cond:time-epsilon}& \forall T>0\qquad \lim_{\varepsilon\to0+} \sup
_n \E\int_{0}^{T}
\1_{|\xin(t)|\le
\varepsilon} \de t = 0;\\
\label{cond:time-zero}& \int_{0}^{\infty} \1_{\xin[0](t)=0} \de
t = 0\quad \almsur
\end{align}
Assume that, for any $\alpha>0$, $x_0\in\mbR$, and any sequence $\{x_n\}
$ such that $\lim_{n\to\infty} x_n = x_0$, we have
\begin{align}
\label{cond:stopped-tau-conv} \bigl(\xin_{x_n}\bigl(\cdot\wedge\tau^{n, \alpha}
\bigr), \tau^{n, \alpha} \bigr) &\quad\Rightarrow{}\quad \bigl(\xin[0]_{x_0}
\bigl(\cdot\wedge\tau^{0, \alpha}\bigr), \tau^{0, \alpha} \bigr),\quad n\to
\infty;\\
\label{cond:sigma-conv} \xin_{x_n}\bigl(\sigma^{n, \alpha} \bigr)
&\quad\Rightarrow{}\quad \xin[0]_{x_0}\bigl(\sigma^{0, \alpha} \bigr), \quad n\to
\infty. 
\end{align}
Then $\xin\Rightarrow\xin[0]$ in $C([0,\infty))$ as $n\to\infty$.
\end{lemma}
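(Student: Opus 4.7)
The plan is a two-step strategy: first establish tightness of $\{\xin\}$ in $C([0,\infty),\R)$ using~\eqref{cond:nach_usl} and~\eqref{cond:modulus}, and then identify each weak subsequential limit $\wt\xi$ with $\xin[0]$ by an excursion-type decomposition at two levels $0<\alpha<\beta$, applying the strong Markov property together with~\eqref{cond:stopped-tau-conv} on the excursions away from $0$ and~\eqref{cond:sigma-conv} on the excursions towards $0$, and finally letting $\beta\to 0+$. Tightness is standard: \eqref{cond:nach_usl} provides tightness of the initial distributions and \eqref{cond:modulus} is the uniform modulus-of-continuity control, so $\{\xin\}$ is relatively compact in $C([0,T],\R)$ for every $T>0$. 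It therefore suffices to identify the law of any subsequential limit.

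By the Skorokhod representation theorem, we may assume along a subsequence that $\xin\to\wt\xi$ a.s.\ uniformly on compact intervals. Fix $0<\alpha<\beta$, set $S_0^n=0$ and define inductively
\[
T_k^n = \inf\bigl\{t\ge S_{k-1}^n \colon \big|\xin(t)\big|\le\alpha\bigr\},\quad S_k^n = \inf\bigl\{t\ge T_k^n \colon \big|\xin(t)\big|\ge\beta\bigr\},\quad k\ge 1,
\]
together with the analogues $T_k^0, S_k^0$ for $\xin[0]$. On each ``outside'' segment $[S_{k-1}^n,T_k^n]$ the process starts at a point of absolute value $\beta$ (or at $\xin(0)$ for $k=1$); by the strong Markov property at $S_{k-1}^n$, the convergence of the starting point from the previous inductive step, and~\eqref{cond:stopped-tau-conv} applied at level $\alpha$, both the stopped path and the length $T_k^n-S_{k-1}^n$ converge jointly to the corresponding object for $\xin[0]$. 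On each ``inside'' segment $[T_k^n,S_k^n]$ the process starts from absolute value $\alpha$; the strong Markov property and~\eqref{cond:sigma-conv} applied at level $\beta$ give convergence of the exit value $\xin(S_k^n)$, which feeds the next outside segment, while the path itself is confined to $[-\beta,\beta]$ and contributes at most $2\beta$ to the supremum error on that interval. By induction on $k$, the first $K$ alternating pieces of $\xin$ jointly converge to pieces distributed as those of $\xin[0]$.

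The final step is to remove the level-$\beta$ approximation as $\beta\to 0+$. Condition~\eqref{cond:time-epsilon} bounds the total Lebesgue measure of the inside segments on $[0,T]$ by $\E\int_0^T \1_{|\xin(t)|\le\beta}\,dt$, which tends to $0$ as $\beta\to 0+$ uniformly in $n$; the inside segments therefore collapse in length, and each of them contributes $O(\beta)$ to the supremum error. Condition~\eqref{cond:time-zero} plays the symmetric role for $\xin[0]$: since the limit process spends zero time at $0$, its trajectory is fully recoverable from excursions away from arbitrarily small neighborhoods of $0$, and the reconstructed candidate therefore coincides with $\xin[0]$. I anticipate that the main technical obstacle is controlling the number of excursion pairs on $[0,T]$ uniformly in $n$ and $K$, because the limit process may return to $0$ in a dense set of times (as does a Bessel process of index in $(-1/2,1/2)$); the occupation-time bound~\eqref{cond:time-epsilon} is exactly what allows the finite-$K$ induction to be upgraded to a statement about the whole path, by converting a potentially unbounded number of inside segments into a finite total duration and hence a controllable cumulative error.
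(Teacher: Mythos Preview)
The paper does not actually prove this lemma: it is quoted verbatim as ``a particular case of Theorem~2 of~\cite{PilipenkoPrykhodko2015}'' and then used as a black box. So there is no in-paper proof to compare your proposal against. Your outline---tightness from \eqref{cond:nach_usl}--\eqref{cond:modulus}, Skorokhod representation, and an excursion decomposition at two thresholds $0<\alpha<\beta$ with the strong Markov property feeding \eqref{cond:stopped-tau-conv} on the outside pieces and \eqref{cond:sigma-conv} on the inside exits---is the natural strategy and is presumably close to what the cited paper does.

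Two places in your sketch need tightening. First, for fixed $\alpha<\beta$ condition~\eqref{cond:sigma-conv} gives you only the exit \emph{value} $\xin(S_k^n)$, not the exit \emph{time} $S_k^n-T_k^n$, so concatenating convergent outside pieces with inside pieces of unknown duration does not yet give a convergent path. The clean fix is to pass to the time change that excises the inside intervals: the time-changed process is a pure concatenation of outside pieces and converges by induction, while \eqref{cond:time-epsilon} (for $\xin$) and \eqref{cond:time-zero} (for $\xin[0]$) guarantee that this time change is uniformly close to the identity as $\beta\to0$. Second, your last paragraph attributes to \eqref{cond:time-epsilon} the control on the \emph{number} of excursion pairs; it does not do that. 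The occupation bound controls only the total inside \emph{duration}. Uniform-in-$n$ finiteness of the number of $\beta$-to-$\alpha$ downcrossings on $[0,T]$ comes instead from the modulus condition \eqref{cond:modulus} (or, equivalently, from the already-established tightness), since each downcrossing consumes at least some $\delta=\delta(\beta-\alpha)>0$ of time with high probability. Keeping these two controls separate is what makes the double limit $K\to\infty$ then $\beta\to0$ rigorous.
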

We apply this lemma for $\xin=\xn,n\ge1$, and $\xin[0]=\xn[\infty]$ in
cases \ref{case:positive}--\ref{case:skew} of the theorem. Case \ref
{case:random} will be considered separately.
\begin{remark}
Condition~\eqref{cond:sigma-conv} is the only condition that is not
true in case \ref{case:random}. It fails if $x_0=0$. Indeed, for any
$x>0$, the process $B^+_{c_+}( x, \cdot)$ does not hit~0. So, we may
select a sequence $\{x_n\}\subset(0,\infty)$ that converges to 0
sufficiently slowly and such that, given $X_n(0)=x_n$, we have
$X_n(\cdot)\Rightarrow B_+(0,\cdot)$ and $\lim_{n\to\infty}P(\exists
t\geq0\colon \ X_n(t)=0)=0$.
The concrete selection of $\{x_k\}$ can be done using formulas~\eqref
{eq_sc767} and \eqref{eq_sc777}. Since $B_+(0,\sigma^{0, \alpha})=\alpha
$ a.s., we get $X_n(\sigma^{n, \alpha})\Rightarrow\alpha$. However, if
all $x_n$ were negative, then the limit might be $-\alpha$.
\end{remark}

Conditions~\eqref{cond:nach_usl} and \eqref{cond:time-zero} are obvious.

The convergence
\begin{equation}
\label{cond:stopped-tau-conv1} \forall\alpha>0\ \ \ \xin_{x_n}\bigl(\cdot\wedge
\tau^{n, \alpha}\bigr) \quad\Rightarrow \quad\xin[0]_{x_0}\bigl(\cdot\wedge
\tau^{0, \alpha}\bigr),\quad n\to\infty,
\end{equation}
follows from \cite[Section~3]{LeGall1983} or \cite[Section~3.7]{Makhno2012}.
Since
\[
P \bigl(\forall\ve>0\ \exists t\in\bigl(\tau^{0, \alpha},
\tau^{0, \alpha
}+\ve\bigr) \colon\ \big|\xin[0]_{x_0}(t)\big|<\alpha\ | \
\tau^{0, \alpha}<\infty \bigr)=1,
\]
convergence \eqref{cond:stopped-tau-conv1} yields the convergence of
pairs~\eqref{cond:stopped-tau-conv}.

Let us check condition~\eqref{cond:modulus}.
Set
\begin{align*}
A(y) &= \exp\Biggl\{-2\int_{0}^{y} \wta(z) \de z
\Biggr\},\\
A_n(y) &= \exp\Biggl\{-2\int_{0}^{y}
n\wta(nz) \de z\Biggr\} = A(ny),\quad y\in\R,\\
\varPhi_n(x)&=\int_0^x
A_n(y)\de y,\quad x\in\R.
\end{align*}
Observe that $\varPhi_n:\R\to\R$ is a bijection, $\varPhi_n(0)=0$, and
\[
\exists L>0\ \ \forall n\ \ \forall x, y\in\mbR\qquad L^{-1}|x-y|\le \big|
\varPhi_n(x)-\varPhi_n(y)\big|\le L|x-y|.
\]
It\^o's formula yields
\[
\de\varPhi_n\bigl(\xn(t)\bigr)= A\bigl(n\xn(t)\bigr) \biggl(
\frac{\bar c(n\xn(t))}{\xn(t)} \de t + \de W(t) \biggr).
\]
So
\begin{align*}
\bigl\llvert \xn(t)-\xn(s)\bigr\rrvert &\le L\bigl\llvert \varPhi_n
\bigl(\xn(t)\bigr)-\varPhi_n\bigl(\xn(s)\bigr)\bigr\rrvert \\
&\le L\Biggl\llvert \int_s^t A\bigl(n\xn(z)
\bigr) \frac{\bar c(n\xn(z))}{\xn(z)} \de z\Biggr\rrvert + L\Biggl\llvert \int
_s^t A\bigl(n\xn(z)\bigr) \de W(z)\Biggr\rrvert
.
\end{align*}

Let $|s-t|<\delta$, and let $\varDelta>0$ be fixed.
Denote $f_n(t) := \int_0^t A(n\xn(z)) \de W(z)$.

a) Assume that $|\xn(z)|>\varDelta, z\in[s,t]$.
Then
\[
\Biggl\llvert \int_s^t A\bigl(n\xn(z)\bigr)
\frac{\bar c(n\xn(z))}{\xn(z)} \de z\Biggr\rrvert \le C\delta/\varDelta,
\]
where $C=\|A\|_\infty\max(|c_-|, |c_+|)<\infty$. Hence, we have the estimate
\[
\big|\xn(t)-\xn(s)\big|\le LC\delta/\varDelta+L\omega_{f_n}(\delta),
\]
where $\omega_f(\delta)=\sup_{|s-t|<\delta, \ s,t\in[0,T]}|f(t)-f(s)|$
is the modulus of continuity.\vadjust{\eject}

b) Assume that $|\xn(z_0)|\le\varDelta$ for some $z_0\in[s,t]$.

Denote $\tau:=\inf\{z\ge s \colon |\xn(z)|\le\varDelta\}$ and $\sigma:=\sup\{
z\le t \colon |\xn(z)|\le\varDelta\}$. Then
\begingroup\abovedisplayskip=6pt
\belowdisplayskip=6pt
\begin{align*}
\big|\xn(t) - \xn(s)\big| &\le\big|\xn(s) - \xn(\tau)\big| + \big|\xn(\sigma) - \xn(t)\big| + 2\varDelta\\
&\le2LC\delta/\varDelta+2L\omega_{f_n}(\delta)+ 2\varDelta.
\end{align*}

Thus, in any case, we have the following estimate of the modulus of continuity:
\[
\omega_{\xn}(\delta) \le2LC\delta/\varDelta+2L\omega_{f_n}(
\delta)+ 2\varDelta.
\]

Let $\varDelta\le\varepsilon/6$. Then, for
$\delta\le\frac{\varepsilon\varDelta}{6LC}$, we have
\begin{align*}
\sup_n \Pb\bigl(\omega_{\xn}(\delta)\ge\varepsilon
\bigr)& \le \sup_n\Pb\bigl(\ve/3+2L\omega_{f_n}(
\delta)+\ve/3\ge\varepsilon\bigr)\\
&= \sup_n\Pb\bigl(\omega_{f_n}(\delta)\ge
\varepsilon/6L\bigr) \to 0,\quad \delta\to0+.
\end{align*}
The last convergence follows from the fact that the sequence of
distributions of
$\{f_n(\cdot) = \int_0^. A(n\xn(z)) \de W(z)\}_{n\ge1}$
in the space of continuous functions is weakly relatively compact
because the function $A$ is bounded.

Let us prove \eqref{cond:sigma-conv} in cases \ref{case:positive}--\ref
{case:skew}.
\begin{remark}
The proof below yields that condition~\eqref{cond:sigma-conv} is true
if $x_n=0$ for all $n\ge0$.
\end{remark}
Let $|x|<\alpha$. It is easy to see that
$P_x(\sigma^{n,\alpha}<\infty)=1, \ n\in\mbN\cup\{\infty\}$. Since the
process $\xn$ is continuous, we have
$|\xn(\sigma^{n,\alpha})| = \alpha$ a.s.

By $p^n_x=P_x(\xn(\sigma^{n,\alpha}) = \alpha), \ n\in\mbN\cup\{\infty
\}$, we denote the probability
to reach~$\alpha$ before reaching~$-\alpha$ when starting from~$x$.



Using formulas \eqref{eq_scale1} and \eqref{eq_scale2} for the scale of
a Bessel process and a skew Bessel
process, it
is easy to check that
\begin{equation}
\label{eq_sc757} p^\infty_x = %
\begin{cases}
\1_{x\ge0} -  \bigl(1-\frac{\psi_{c_-}(-x)}{\psi_{c_-}(\alpha)} \bigr)\1
_{x< 0}
&\text{\xch{in cases}{in e cases} \ref{case:positive}, \ref{case:positive1}},
\\[4pt]
\frac{\psi_{c_+}(x)}{\psi_{c_+}(\alpha)}\cdot\1_{x>0}
&\text{in cases \ref{case:negative}, \ref{case:negative1}},
\\[6pt]
\frac{\psi_c(|x|)}{\psi_c(\alpha)}(q\1_{x\ge0}-p\1_{x<0})+p
&\text{in case \ref{case:skew}},
\end{cases} %
\end{equation}
where $\psi_c$ is given in \eqref{eq_scale1}.

For $n\in\mbN$, we have (see \cite[Section~15]{GikhmanSkorokhod1968} and
\cite{RevuzYor1999})
\begin{equation}
\label{eq_sc767} p^n_x = \frac{\varphi_n(x) - \varphi_n(-\alpha)}{\varphi_n(\alpha) -
\varphi_n(-\alpha)},
\end{equation}
where
\begin{align}
\label{eq_sc777} \varphi_n(x) &= \int_{0}^{x}
\exp\Biggl\{-2\int_{0}^{y} a_n(z) \de
z\Biggr\} \de y= \int_{0}^{x} \exp\Biggl\{-2\int
_{0}^{y} n a(nz) \de z\Biggr\} \de y\notag\\
& =
\dfrac{1}{n} \int_{0}^{nx}
\exp\Biggl\{-2\int_{0}^{y} a(z) \de z\Biggr\} \de
y = \dfrac{1}{n} \vf(nx),\\
\vf(x) &:= \int_{0}^{x} \exp\Biggl\{-2\int
_{0}^{y} a(z) \de z\Biggr\} \de y.\notag
\end{align}
\endgroup
The function $\vf$ is increasing. It follows from the definition of $a$
that $\vf$ is bounded from above (below) if{}f $ c_+>1/2$ ($c_->1/2$).
The function $\vf$ has the following asymptotic behavior:
\begin{equation}
\label{eq_sc787} \varphi(x) \sim %
\begin{cases}
\pm A(\pm\infty) \frac{|x|^{1-2c_\pm}}{1-2c_\pm}& \text{ if }\ c_\pm
<1/2,\\
\pm A(\pm\infty) \ln|x|& \text{ if }\ c_\pm=1/2,
\end{cases} %
\quad x\to\pm\infty,
\end{equation}
where
\[
A(y) = \exp\Biggl\{-2\int_{0}^{y} \wta(z) \de z
\Biggr\},\quad y\in\R,
\]
and
\begin{equation}
\label{eq_sc788} \lim_{x\to\pm\infty} \varphi(x) = \vf(\pm\infty)\in\mbR
\quad\text{ if }\ c_\pm>1/2.
\end{equation}

Condition \eqref{cond:sigma-conv} follows from \eqref{eq_sc757}, \eqref
{eq_sc767}, \eqref{eq_sc777}, \eqref{eq_sc787}, \eqref{eq_sc788} in
cases \ref{case:positive}--\ref{case:skew} (and in case \ref
{case:random} if $x_n=0,\ n\ge0$).

Set $\tau_n=\inf\{t\ge0 \colon |\xn(t)|\ge1\}$.
\begin{lemma} Assume that
\begin{equation}
\label{eq_770} \lim_{\varepsilon\to0+} \sup_{|x|\le1}\sup
_n \E_x \int_{0}^{\tau_n}
\1 _{|\xn(t)|\le\varepsilon} \de t = 0.
\end{equation}
Then \eqref{cond:time-epsilon} is satisfied, that is,
\[
\forall T>0\quad \lim_{\varepsilon\to0+} \sup_n \E\int
_{0}^{T} \1_{|\xn
(t)|\le\varepsilon} \de t = 0.
\]
\end{lemma}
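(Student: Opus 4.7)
The plan is to partition $[0,T]$ into successive ``visits'' of $X_n$ to the interval $[-1,1]$ separated by ``excursions'' into $\{|X_n|>1/2\}$, apply the hypothesis \eqref{eq_770} on each visit via the strong Markov property, and then bound the expected number of visits uniformly in~$n$.

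First I would reduce to $|x_0|\le 1$: for $\varepsilon<1$ the integrand $\1_{|X_n(t)|\le\varepsilon}$ vanishes while $|X_n(t)|>1$, so if $|x_0|>1$ the strong Markov property applied at the first entry time of $X_n$ into $[-1,1]$ brings us back to this case. Fix $\varepsilon<1/2$ and introduce stopping times $\sigma_0=0$, $\eta_k=\inf\{t\ge\sigma_k:|X_n(t)|\ge 1\}$, $\sigma_{k+1}=\inf\{t\ge\eta_k:|X_n(t)|\le 1/2\}$. Because $|X_n(t)|>1/2>\varepsilon$ on each open interval $(\eta_k,\sigma_{k+1})$, we have
\[
\int_0^T \1_{|X_n(t)|\le\varepsilon}\,dt=\sum_{k\ge 0}\int_{\sigma_k\wedge T}^{\eta_k\wedge T}\1_{|X_n(t)|\le\varepsilon}\,dt.
\]
At each $\sigma_k<T$ one has $|X_n(\sigma_k)|\le 1$, so the strong Markov property together with \eqref{eq_770} yields
\[
\E\!\left[\int_{\sigma_k}^{\eta_k\wedge T}\!\1_{|X_n|\le\varepsilon}\,dt\,\Big|\,\mathcal F_{\sigma_k}\right]\le f(\varepsilon):=\sup_{|y|\le 1}\sup_n\E_y\!\int_0^{\tau_n}\!\1_{|X_n|\le\varepsilon}\,dt,
\]
with $f(\varepsilon)\to 0$ as $\varepsilon\to 0+$. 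Summing over $k$,
\[
\E\int_0^T\1_{|X_n(t)|\le\varepsilon}\,dt\le f(\varepsilon)\cdot\E[N_T],\qquad N_T:=\#\{k\ge 0:\sigma_k<T\}.
\]

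The main obstacle is therefore to establish $\sup_n\E[N_T]<\infty$. Each cycle $\sigma_{k+1}-\eta_k$ forces $|X_n|$ to traverse the distance $1/2$ inside the region $\{|X_n|\ge 1/2\}$, where the singular drift $\bar c(nX_n)/X_n$ is bounded by $2\max(|c_+|,|c_-|)$ uniformly in~$n$; only the $L^1$-concentrated contribution $n\wta(nX_n)$ is delicate there. I would attack this by Doob's upcrossing inequality for the semimartingale $|X_n|$, whose martingale part is a standard Brownian motion by L\'evy's theorem: $N_T$ is majorised by the number of upcrossings of $(1/2,1)$ by $|X_n|$ on $[0,T]$, which Doob bounds in terms of $\E|X_n(T)|$ plus the total variation of the drift part of $|X_n|$. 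Uniform-in-$n$ control of these quantities is to be obtained from It\^o's formula applied to $|X_n|^2$, using that $X_n\,a_n(X_n)=nX_n\wta(nX_n)+\bar c(nX_n)\1_{|X_n|>1/n}$, together with the preserved $L^1$-norm $\int|n\wta(nx)|\,dx=\|\wta\|_{L^1(\R)}$ and the boundedness of $\bar c$. Once $\sup_n\E[N_T]<\infty$ is in hand, the lemma follows from the estimate of the previous paragraph by letting $\varepsilon\to 0+$.
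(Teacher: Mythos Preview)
Your excursion decomposition and the use of the strong Markov property to bound each visit by $f(\varepsilon)$ is the same skeleton as the paper's argument. The substantive difference --- and the place where your proposal has a real gap --- is the control of $\E[N_T]$.

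You want to dominate $N_T$ by the number of down/upcrossings of $(1/2,1)$ by $|X_n|$ and then appeal to Doob's inequality for semimartingales; this requires a uniform-in-$n$ bound on the total variation on $[0,T]$ of the finite-variation part of $|X_n|$, in particular on $\int_0^T |n\,\wta(nX_n(s))|\,ds$. The sketch you give (It\^o on $X_n^2$ together with $\int|n\wta(nx)|\,dx=\|\wta\|_{L^1}$) does not close this: It\^o on $X_n^2$ produces $\E\int_0^T nX_n\,\wta(nX_n)\,ds$, and only $\wta\in L^1$ is assumed, with no control on $z\mapsto z\,\wta(z)$. Converting the preserved $L^1$-norm into a pathwise bound via the occupation-time formula would need $\sup_{x,n}\E L_T^x(X_n)<\infty$, and Tanaka's formula for $L_T^x(X_n)$ again contains the unbounded drift --- the argument is circular as written.

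The paper sidesteps this by a different and lighter device. It runs two chains of stopping times, hitting $\pm1$ and then $\pm\varepsilon$, and observes that the passage times $\alpha_{n,\pm}^k$ from $\pm1$ to $\pm\varepsilon$ are i.i.d.\ in $k$ and independent of the occupation contributions $\beta_{n,\pm}^k$. Since at most $K'_\pm:=\max\{k:\alpha_{n,\pm}^1<T,\dots,\alpha_{n,\pm}^k<T\}$ of the $\beta$'s can contribute before time $T$, and $K'_\pm$ is geometric with parameter $p_{n,\pm}=\Pb_{\pm1}(\alpha_{n,\pm}^1\ge T)$, one gets
\[
\E\int_0^T\1_{|X_n|\le\varepsilon}\,dt\ \le\ \bigl(1+p_{n,+}^{-1}+p_{n,-}^{-1}\bigr)\,f(\varepsilon).
\]
All that remains is $\inf_n p_{n,\pm}>0$, i.e.\ the passage from level $\pm1$ to level $\pm\varepsilon$ is not uniformly fast; this is immediate from the bi-Lipschitz transformation $\varPhi_n$ (already introduced for the modulus-of-continuity estimate), since $\varPhi_n(X_n)$ has drift and diffusion uniformly bounded on $\{|X_n|\ge\varepsilon\}$. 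No total-variation bound on the original drift is needed.

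If you wish to salvage the upcrossing route, the natural repair is to apply it to $|\varPhi_n(X_n)|$ rather than $|X_n|$, so that the $\wta$-contribution is removed and the finite-variation part is uniformly controlled on $\{|X_n|\ge1/2\}$; this amounts to the same reduction the paper uses.
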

\begin{proof}
Introduce the notations
\begin{align*}
S_{n,\pm}^0&:=0, \qquad T_{n,\pm}^{k}:=\inf
\bigl\{ t\geq S_{n,\pm}^{k-1} \colon\ X_n(t) =\pm1\bigr
\},\\
S_{n,\pm}^{k}&:=\inf\bigl\{ t\geq T_{n,\pm}^k
\colon\ X_n(t) = \pm\ve\bigr\},\\
\tilde T_{n,\pm}^{k}&:=\inf\bigl\{ t\geq S_{n,\pm}^k
\colon\ \big|X_n(t)\big| = 1 \bigr\},\\
\beta_{n,\pm}^k&:= \int^{\tilde T_{n,\pm}^{k}}_{S_{n,\pm}^{k}}
\1_{|\xn
(t)|\le\varepsilon} \de t , \qquad \alpha_{n,\pm}^k:=
S_{n,\pm}^{k}-T_{n,\pm}^{k}, \quad \xch{k\geqslant1}{k\geq1}.
\end{align*}
Then
\begin{align*}
&\int_{0}^{T} \1_{|\xn(t)|\le\varepsilon} \de t\\
&\quad\le \int_{0}^{\tau_n} \1_{|\xn(t)|\le\varepsilon} \de t+ \sum
_{k}\bigl(\beta_{n,+}^1+
\cdots+\beta_{n,+}^k\bigr)\1_{\alpha_{n,+}^1<T,\dots
,\alpha_{n,+}^{k}<T,
\alpha_{n,+}^{k+1}\xch{\geqslant}{\geq} T} {}\\
&\qquad + \sum_{k}\bigl(\beta_{n,-}^1+
\cdots+\beta_{n,-}^k\bigr)\1_{\alpha_{n,-}^1<T,\dots
,\alpha_{n,-}^{k}<T,
\alpha_{n,-}^{k+1}\xch{\geqslant}{\geq} T}\,.
\end{align*}
It follows from the strong Markov property
that
\begin{align*}
&\sum_k\E\bigl(\beta_{n,+}^1+
\cdots+\beta_{n,+}^k\bigr)\1_{\alpha_{n,+}^1<T,\dots
,\alpha_{n,+}^{k}<T,
\alpha_{n,+}^{k+1}\xch{\geqslant}{\geq} T}\\
&\quad= \sum_k k \E_\ve\int
_{0}^{\tau_n} \1_{|\xn(t)|\le\varepsilon} \de t \,
(1-p_{n,+})^k p_{n,+}= (p_{n,+})^{-1}
\E_\ve\int_{0}^{\tau_n} \1_{|\xn
(t)|\le\varepsilon}
\de t ,
\end{align*}
where $p_{n,+}= \Pb_1(S_{n,+}^{1}\xch{\geqslant}{\geq} T)$.

Considering the last term similarly, we get the inequality
\[
\E\int_{0}^{T} \1_{|\xn(t)|\le\varepsilon} \de t \le
\bigl(1+ (p_{n,+})^{-1}+ (p_{n,-})^{-1}
\bigr) \sup_{|x|\le1}\sup_n \E _x
\int_{0}^{\tau_n} \1_{|\xn(t)|\le\varepsilon} \de t.
\]
It is not difficult to see that $\sup_n(p_{n,\pm})^{-1}<\infty$. The
lemma is proved.
\end{proof}

Let us verify \eqref{eq_770}.
It is known \cite[Chap.~4.3]{Knight1981} that
\[
u_{n,\ve}(x) := \E_x \int_{0}^{\tau_n}
\1_{|\xn(t)|\le\varepsilon} \de t
\]
is of the form
\begin{equation}
\label{eq_time746} u_{n,\ve}(x)=\int_{-1}^1G_n(x,y)
\1_{|y|\le\ve}\,m_n(d y),
\end{equation}
where Green's function $G_n$ equals
\[
G_n(x,y)= %
\begin{cases}
\frac{(\vf_n(x)-\vf_n(-1))(\vf_n(1)-\vf_n(y))}{\vf_n(1)-\vf_n(-1)},
& x\le y,\\
G_n(y,x),\
& x\ge y,
\end{cases} %
\]
with $\vf_n$ given by formula \eqref{eq_sc777}, and
\[
m_n(d x)= \exp\Biggl\{2\int_{0}^{x}
a_n(z) \de z\Biggr\} \de x.
\]
The function $u_{n,\ve}(x)$ is a generalized solution (because $a_n$
may be discontinuous) of the equation
\[
1/2 \, u_{n,\ve}''(x)+ a_n(x)
u'_{n,\ve}(x)=-\1_{|x|\le\ve}(x),\quad |x|\le1,
\]
with boundary conditions $u_{n,\ve}(\pm1)=0$.

A direct verification of the condition
$\lim_{\varepsilon\to0+} \sup_{|x|\le1}\sup_n u_{n,\ve}(x) = 0$
is possible but cumbersome. We prove the corresponding convergence
using the~comparison theorem.
We consider only the case where $a_n$ satisfies the Lipschitz
condition. The general case follows by approximation.

It follows from the It\^o--Tanaka formula that
\begin{align*}
\de\big|\xn(t)\big|&= \sign\bigl(\xn(t)\bigr)\, a_n\bigl(\xn(t)\bigr) \de t +
\sign\bigl(\xn(t)\bigr)\de W(t) +\de l_n(t)\\
&= \sign\bigl(\xn(t)\bigr)\, a_n\bigl(\xn(t)\bigr) \de t +\de
W_n(t) +\de l_n(t),
\end{align*}
where $W_n$ is a new Wiener process, and $l_n$ is the local time of $\xn
$ at zero.

Let $-1/2 < c < \min(c_-, c_+,0)$.
It follows from the arguments of \cite{PieraMazumdar2008} on comparison
of reflecting SDEs that
$|\xn(t)|\ge Y_n(t),\ t\ge0$, where
$Y_n $ satisfies the following SDE with reflection at zero:
\[
\de Y_n(t)=\bar a_n\bigl(Y_n(t)\bigr)\de t+
\de W_n(t) +\de\tilde l_n(t).
\]
Here $W_n(t)=\int_0^t\sign(\xn(s))\de W(s)$ is a Wiener process,
$\tilde l_n$ is the local time of $Y_n$ at 0, $\bar a_n(x)=n\bar
a(nx),\ \bar a(x)= - (|a(x)|+|a(-x)|)-\frac{ c}{x}\1_{|x|>1}+r(x)$, and
$r$ is any nonpositive function such that $\bar a$ satisfies Lipschitz
condition. We will also assume that $\int_\R|r(x)|\de x\le\int_\R
|b(x)|\de x$. The Lipschitz property is used only for application of
comparison theorem.

To prove \eqref{eq_770}, it suffices to verify that
\[
\lim_{\varepsilon\to0} \sup_{x\in[0,1]}\sup
_n \bar u_{n,\ve}(x):= \lim_{\varepsilon\to0}
\sup_{x\in[0,1]}\sup_n \E_x \int
_{0}^{\bar\tau_n} \1 _{ Y_n(s)\in[0,\varepsilon]} \de s = 0,
\]
where
$
\bar\tau_n
$
is the entry time of $Y_n$ into $[1, \infty)$.

It is known \cite{Knight1981} that
\[
\bar u_{n,\ve}(x)=2\int_x^1\exp\Biggl
\{-2\int_1^y\bar a_n(z)\de z\Biggr\}
\int_0^y \1_{[0,\ve]}(z) \exp\Biggl\{2
\int_1^y\bar a_n(z)\de z\Biggr\} \de
y
\]
is a (generalized) solution of the equation
\[
1/2\, \bar u_{n,\ve}''(x)+\bar
a_n(x)\bar u'_{n,\ve}(x)=-\1_{[0,\ve]},\quad
x\in[0,1],
\]
with boundary conditions
$u_{n,\ve}'(0)=0,\ u_{n,\ve}(1)=0$.
So
\begin{align}
&\sup_{x\in[0,1]}\sup_n \bar
u_{n,\ve}(x)\notag\\
&\quad = \bar u_{n,\ve}(0)\notag\\
&\quad= 2\int_0^1\exp\Biggl\{-2\int
_1^y\bar a_n(z)\de z\Biggr\} \int
_0^y \1_{[0,\ve]}(z) \exp\Biggl\{2\int
_1^y\bar a_n(z)\de z\Biggr\} \de y\notag\\
&\quad\label{eq_838} \le K\int_0^1\exp\Biggl\{\int
_0^y\bar y^{-2c}\de z\Biggr\} \int
_0^y \1_{[0,\ve]}(z)\, y^{2c}\de
y,
\end{align}
where $K$ is a constant that depends only on $\int_\R|b(x)|\de x$ and
$c$ (and is independent of~$n$). By our choice, $c\in(-1/2,0)$, so
the right-hand side of \eqref{eq_838} tends to~0 as $\ve\to0+$ by the
Lebesgue dominated convergence theorem.

The theorem is proved in cases \ref{case:positive}--\ref{case:skew}.

Consider case \ref{case:random}.
Note that conditions \eqref{cond:nach_usl}--\eqref
{cond:stopped-tau-conv} are satisfied for $\xin=\xn$, $n\ge1$, and $\xin
[0]=\xn[\infty]$, where $\xn[\infty]$
is given in the theorem. In particular, this implies that the sequence
of distributions of stochastic processes
$\{\xn\}$ in the space of continuous functions is weakly relatively
compact. Choosing an arbitrary convergent subsequence, without loss of
generality, we may assume that $\{\xn\}$ itself converges weakly to a
continuous process $X$. Let $\delta>0$, and let $\sigma^{n,\delta}=\inf
\{t\ge0  \colon \xn(t)=\delta\},\ \sigma^{ \delta}=\inf\{t\ge0 \colon
X(t)=\delta\}$.
It follows from formulas for the scale function of the processes $\{\xn
\}$ that
$\lim_{n\to\infty}P(\xn(\sigma^{n,\delta})=\delta)=p,\ \lim_{n\to\infty
}P(\xn(\sigma^{n,\delta})=-\delta)=1-p$,
where $p$ is given by \eqref{eq_p}. Formulas \eqref{cond:time-epsilon}
and \eqref{cond:stopped-tau-conv} imply that the limit process exits
from the interval
$[-\delta,\delta]$ with probability~1.

Observe that,
for almost all $\delta>0$, with respect to the Lebesgue measure, the
distribution of $X_n(\sigma^{n,\delta}+\cdot)$ 
converges weakly as $n\to\infty$ to the distribution of
$X(\delta+\cdot)$.\break
Indeed, by the Skorokhod theorem on a single probability space (see
\cite{Skorokhod1965studies}),
without\vadjust{\eject} loss of generality, we may assume that the sequence $\{X_n\}$
converges to $X$ uniformly on compact sets with probability 1. For
simplicity, we will assume that the convergence holds for all $\omega$
and that also $\sigma^{n,\delta}, \sigma^\delta<\infty$ for all $\omega
, n, \delta>0$. So we show convergence
\begin{equation}
\label{eq801} X_n\bigl(\sigma^{n,\delta}+\cdot\bigr)\to X\bigl(
\sigma^{\delta}+\cdot\bigr)
\end{equation}
if we prove that
\begin{equation}
\label{eq802} \sigma^{n,\delta}\to\sigma^\delta,\quad n\to\infty.
\end{equation}
Convergence \eqref{eq802} may fail only if $\sigma^\delta$
is a point of a local maximum of $X$. It follows from the definition
that $\sigma^\delta$ is a point of a strict local maximum of $X$ from
the left. The set of points of local maximums that are strict maximums
from the left is at most countable. This yields that, for almost all
$\omega$ and almost all $\delta>0$ with respect
to the Lebesgue measure, we have convergence \eqref{eq802} and
hence~\eqref{eq801}.

On the other hand, the distribution of $X_n(\sigma^{n,\delta}+\,\cdot)$
converges weakly as $n\to\infty$ to the distribution of the process $\1
_{\varOmega_-}B^-_{c_-}(-\delta, \cdot)+\1_{\varOmega_+}B^+_{c_+}(\delta,
\cdot)$, where $P(\varOmega_-)=1-p,\ P(\varOmega_+)=p$, and the $\sigma
$-algebra $\{\emptyset, \varOmega_-, \varOmega_+, \varOmega\}$ is independent of
$\sigma(B^\pm_{c_\pm}(\pm\delta, t) ,\break t\ge0)$.

Recall that assumptions of the theorem yield
\[
P \bigl(\exists t\ge0\colon B^\pm_{c_\pm}(\pm\delta, t)=0 \bigr)=0.
\]
It follows from \eqref{cond:time-epsilon} that
\[
P \Biggl(\int_0^\infty\1_{X(s)=0}\de s=0
\Biggr)=1.
\]
Thus, we have the almost sure convergence in $C([0,\infty))$
\[
X\bigl(\sigma^\delta+\cdot\bigr)\to X( \cdot), \quad \delta\to0.
\]
The processes $\1_{\varOmega_-}B^-_{c_-}(-\delta, \cdot)+\1_{\varOmega
_+}B^+_{c_+}(\delta, \cdot)$ converge in distribution
to
\[
\1_{\varOmega_-}B^-_{c_-}(0, \cdot)+\1_{\varOmega_+}B^+_{c_+}(0,
\cdot),
\]
where the $\sigma$-algebras $\{\emptyset, \varOmega_-, \varOmega_+, \varOmega\}$
and $\sigma(B^\pm_{c_\pm}(0, t) , t\ge0)$ are independent.

This completes the proof of 
Theorem~\ref{thm:main-theorem}.

\section*{Acknowledgments}
The authors thank the anonymous referee for
valuable comments that helped improving the presentation.


\begin{thebibliography}{99}

\bibitem{BarlowPitmanYor1989}
%
\begin{barticle}
\bauthor{\bsnm{Barlow}, \binits{M.T.}},
\bauthor{\bsnm{Pitman}, \binits{J.}},
\bauthor{\bsnm{Yor}, \binits{M.}}:
\batitle{On {W}alsh's {B}rownian motions}.
\bjtitle{S{\'e}min. Probab. Strasbourg}
\bvolume{23},
\bfpage{275}--\blpage{293}
(\byear{1989}).
\bid{doi={10.1007/BFb0083979}, mr={1022917}}
\end{barticle}
%
%
\OrigBibText
%
\begin{barticle}
\bauthor{\bsnm{Barlow}, \binits{M.T.}},
\bauthor{\bsnm{Pitman}, \binits{J.}},
\bauthor{\bsnm{Yor}, \binits{M.}}:
\batitle{On {W}alsh's {B}rownian motions}.
\bjtitle{S{\'e}minaire de Probabilit{\'e}s de Strasbourg}
\bvolume{23},
\bfpage{275}--\blpage{293}
(\byear{1989})
\end{barticle}
%
\endOrigBibText
\bptok{structpyb}%
\endbibitem

\bibitem{Blei2012}
%
\begin{barticle}
\bauthor{\bsnm{Blei}, \binits{S.}}:
\batitle{On symmetric and skew {B}essel processes}.
\bjtitle{Stoch. Process. Appl.}
\bvolume{122}(\bissue{9}),
\bfpage{3262}--\blpage{3287}
(\byear{2012}).
\bid{doi={10.1016/j.spa.2012.05.008}, mr={2946442}}
\end{barticle}
%
%
\OrigBibText
%
\begin{barticle}
\bauthor{\bsnm{Blei}, \binits{S.}}:
\batitle{On symmetric and skew {B}essel processes}.
\bjtitle{Stochastic Processes and their Applications}
\bvolume{122}(\bissue{9}),
\bfpage{3262}--\blpage{3287}
(\byear{2012})
\end{barticle}
%
\endOrigBibText
\bptok{structpyb}%
\endbibitem\vadjust{\eject}

\bibitem{EngelbertSchmidt1991}
%
\begin{barticle}
\bauthor{\bsnm{Engelbert}, \binits{H.J.}},
\bauthor{\bsnm{Schmidt}, \binits{W.}}:
\batitle{Strong {M}arkov continuous local martingales and solutions of
one-dimensional stochastic differential equations ({P}art {III})}.
\bjtitle{Math. Nachr.}
\bvolume{151}(\bissue{1}),
\bfpage{149}--\blpage{197}
(\byear{1991}).
\bid{doi={10.1002/mana.19911510111}, mr={1121203}}
\end{barticle}
%
%
\OrigBibText
%
\begin{barticle}
\bauthor{\bsnm{Engelbert}, \binits{H.J.}},
\bauthor{\bsnm{Schmidt}, \binits{W.}}:
\batitle{Strong {M}arkov continuous local martingales and solutions of
one-dimensional stochastic differential equations ({P}art {III})}.
\bjtitle{Mathematische Nachrichten}
\bvolume{151}(\bissue{1}),
\bfpage{149}--\blpage{197}
(\byear{1991})
\end{barticle}
%
\endOrigBibText
\bptok{structpyb}%
\endbibitem

\bibitem{GikhmanSkorokhod1968}
%
\begin{bbook}
\bauthor{\bsnm{Gikhman}, \binits{I.I.}},
\bauthor{\bsnm{Skorokhod}, \binits{A.V.}}:
\bbtitle{Stochastic Differential Equations}.
\bpublisher{Springer}
(\byear{1979})
\end{bbook}
%
%
\OrigBibText
%
\begin{bbook}
\bauthor{\bsnm{Gikhman}, \binits{I.I.}},
\bauthor{\bsnm{Skorokhod}, \binits{A.V.}}:
\bbtitle{Stochastic Differential Equations}.
\bpublisher{Springer}
(\byear{1979})
\end{bbook}
%
\endOrigBibText
\bptok{structpyb}%
\endbibitem

\bibitem{HarrisonShepp1981}
%
\begin{barticle}
\bauthor{\bsnm{Harrison}, \binits{J.M.}},
\bauthor{\bsnm{Shepp}, \binits{L.A.}}:
\batitle{On skew {B}rownian motion}.
\bjtitle{Ann. Probab.}
\bvolume{9}(\bissue{2}),
\bfpage{309}--\blpage{313}
(\byear{1981}).
\bid{mr={0606993}}.
doi:\doiurl{10.1214/aop/1176994472}
\end{barticle}
%
%
\OrigBibText
%
\begin{barticle}
\bauthor{\bsnm{Harrison}, \binits{J.M.}},
\bauthor{\bsnm{Shepp}, \binits{L.A.}}:
\batitle{On skew {B}rownian motion}.
\bjtitle{Ann. Probab.}
\bvolume{9}(\bissue{2}),
\bfpage{309}--\blpage{313}
(\byear{1981}).
doi:\doiurl{10.1214/aop/1176994472}
\end{barticle}
%
\endOrigBibText
\bptok{structpyb}%
\endbibitem

\bibitem{Knight1981}
%
\begin{bbook}
\bauthor{\bsnm{Knight}, \binits{F.B.}}:
\bbtitle{Essentials of {B}rownian Motion and Diffusion},
vol.~\bseriesno{18}.
\bpublisher{American Mathematical \xch{Society}{Soc.}}
(\byear{1981}).
\bid{mr={0613983}}
\end{bbook}
%
%
\OrigBibText
%
\begin{bbook}
\bauthor{\bsnm{Knight}, \binits{F.B.}}:
\bbtitle{Essentials of {B}rownian Motion and Diffusion}
vol.~\bseriesno{18}.
\bpublisher{American Mathematical Soc.}
(\byear{1981})
\end{bbook}
%
\endOrigBibText
\bptok{structpyb}%
\endbibitem

\bibitem{KulinichKushnirenkoMishura2014}
%
\begin{barticle}
\bauthor{\bsnm{Kulinich}, \binits{G.}},
\bauthor{\bsnm{Kushnirenko}, \binits{S.}},
\bauthor{\bsnm{Mishura}, \binits{Y.}}:
\batitle{Asymptotic behavior of integral functionals of unstable
solutions of
one-dimensional {I}t{\^o} stochastic differential equations}.
\bjtitle{Theory Probab. Math. Stat.}
\bvolume{89},
\bfpage{101}--\blpage{114}
(\byear{2014}).
\bid{doi={10.1090/s0094-9000-2015-\\00938-8}, mr={3235178}}
\end{barticle}
%
%
\OrigBibText
%
\begin{barticle}
\bauthor{\bsnm{Kulinich}, \binits{G.}},
\bauthor{\bsnm{Kushnirenko}, \binits{S.}},
\bauthor{\bsnm{Mishura}, \binits{Y.}}:
\batitle{Asymptotic behavior of integral functionals of unstable
solutions of
one-dimensional {I}t{\^o} stochastic differential equations}.
\bjtitle{Theory of Probability and Mathematical Statistics}
\bvolume{89},
\bfpage{101}--\blpage{114}
(\byear{2014})
\end{barticle}
%
\endOrigBibText
\bptok{structpyb}%
\endbibitem

\bibitem{KulinichKaskun1997}
%
\begin{barticle}
\bauthor{\bsnm{Kulinich}, \binits{G.L.}},
\bauthor{\bsnm{Kas'kun}, \binits{E.P.}}:
\batitle{On the asymptotic behavior of solutions of a certain class of one-dimensional
{I}to stochastic differential equations}.
\bjtitle{Theory Probab. Math. Stat.}
\bvolume{56},
\bfpage{97}--\blpage{106}
(\byear{1998}).
\end{barticle}
%
\bid{mr={1791858}}
%
\OrigBibText
%
\begin{botherref}
\oauthor{\bsnm{Kulinich}, \binits{G.L.}},
\oauthor{\bsnm{Kas'kun}, \binits{E.P.}}:
{On the asymptotic behavior of solutions of a certain class of one-dimensional
{I}to stochastic differential equations}.
Theory of Probability and Mathematical Statistics
(56),
97--106
(1998)
\end{botherref}
%
\endOrigBibText
\bptok{structpyb}%
\endbibitem

\bibitem{LeGall1983}
%
\begin{bchapter}
\bauthor{\bsnm{Le~Gall}, \binits{J.-F.}}:
\bctitle{Applications du temps local aux {\'e}quations diff{\'e}rentielles
stochastiques unidimensionnelles}.
In: \bbtitle{S{\'e}minaire de Probabilit{\'e}s XVII \xch{1981/1982}{1981/82}},
pp.~\bfpage{15}--\blpage{31}.
\bpublisher{Springer}
(\byear{1983}).
\bid{doi={10.1007/BFb0068296}, mr={0770393}}
\end{bchapter}
%
%
\OrigBibText
%
\begin{bchapter}
\bauthor{\bsnm{Le~Gall}, \binits{J.-F.}}:
\bctitle{Applications du temps local aux {\'e}quations diff{\'e}rentielles
stochastiques unidimensionnelles}.
In: \bbtitle{S{\'e}minaire de Probabilit{\'e}s XVII 1981/82},
pp.~\bfpage{15}--\blpage{31}.
\bpublisher{Springer}
(\byear{1983})
\end{bchapter}
%
\endOrigBibText
\bptok{structpyb}%
\endbibitem

\bibitem{Lejay2006}
%
\begin{barticle}
\bauthor{\bsnm{Lejay}, \binits{A.}}:
\batitle{On the constructions of the skew {B}rownian motion}.
\bjtitle{Probab. Surv.}
\bvolume{3},
\bfpage{413}--\blpage{466}
(\byear{2006}).
\bid{doi={10.1214/154957807000000013}, mr={2280299}}
\end{barticle}
%
%
\OrigBibText
%
\begin{barticle}
\bauthor{\bsnm{Lejay}, \binits{A.}}:
\batitle{On the constructions of the skew {B}rownian motion}.
\bjtitle{Probab. Surv.}
\bvolume{3},
\bfpage{413}--\blpage{466}
(\byear{2006})
\end{barticle}
%
\endOrigBibText
\bptok{structpyb}%
\endbibitem

\bibitem{Makhno2012}
%
\begin{bbook}
\bauthor{\bsnm{Makhno}, \binits{S.Y.}}:
\bbtitle{Stochastic Equations. Limit Theorems}.
\bpublisher{Kyiv}
(\byear{2012})
\end{bbook}
%
%
\OrigBibText
%
\begin{bbook}
\bauthor{\bsnm{Makhno}, \binits{S.Y.}}:
\bbtitle{Stochastic Equations. Limit Theorems}.
\bpublisher{Kyiv}
(\byear{2012})
\end{bbook}
%
\endOrigBibText
\bptok{structpyb}%
\endbibitem

\bibitem{PieraMazumdar2008}
%
\begin{barticle}
\bauthor{\bsnm{Piera}, \binits{F.J.}},
\bauthor{\bsnm{Mazumdar}, \binits{R.R.}}:
\batitle{Comparison results for reflected jump-diffusions in the
orthant with
variable reflection directions and stability applications}.
\bjtitle{Electron. J. Probab.}
\bvolume{13}(\bissue{61}),
\bfpage{1886}--\blpage{1908}
(\byear{2008}).
\bid{doi={10.1214/EJP.v13-569}, mr={2453549}}
\end{barticle}
%
%
\OrigBibText
%
\begin{barticle}
\bauthor{\bsnm{Piera}, \binits{F.J.}},
\bauthor{\bsnm{Mazumdar}, \binits{R.R.}}:
\batitle{Comparison results for reflected jump-diffusions in the
orthant with
variable reflection directions and stability applications}.
\bjtitle{Electron. J. Probab}
\bvolume{13}(\bissue{61}),
\bfpage{1886}--\blpage{1908}
(\byear{2008})
\end{barticle}
%
\endOrigBibText
\bptok{structpyb}%
\endbibitem

\bibitem{PilipenkoPrykhodko2015}
%
\begin{barticle}
\bauthor{\bsnm{Pilipenko}, \binits{A.Y.}},
\bauthor{\bsnm{Prykhodko}, \binits{Y.E.}}:
\batitle{On the limit behavior of a sequence of {M}arkov processes
perturbed in
a neighborhood of the singular point}.
\bjtitle{Ukr. Math. J.}
\bvolume{67}(\bissue{4}),
\bfpage{564}--\blpage{583}
(\byear{2015}).
\bid{doi={10.1007/s11253-015-1101-5}, mr={3432463}}
\end{barticle}
%
%
\OrigBibText
%
\begin{barticle}
\bauthor{\bsnm{Pilipenko}, \binits{A.Y.}},
\bauthor{\bsnm{Prykhodko}, \binits{Y.E.}}:
\batitle{On the limit behavior of a sequence of {M}arkov processes
perturbed in
a neighborhood of the singular point}.
\bjtitle{Ukrainian Mathematical Journal}
\bvolume{67}(\bissue{4}),
\bfpage{564}--\blpage{583}
(\byear{2015})
\end{barticle}
%
\endOrigBibText
\bptok{structpyb}%
\endbibitem

\bibitem{Portenko1976}
%
\begin{bchapter}
\bauthor{\bsnm{Portenko}, \binits{N.I.}}:
\bctitle{Generalized diffusion processes}.
In: \beditor{\bsnm{Maruyama}, \binits{G.}},
\beditor{\bsnm{Prokhorov}, \binits{J.V.}} (eds.)
\bbtitle{Proceedings of the Third Japan--USSR Symposium on Probability
Theory}.
\bsertitle{Lecture Notes in Mathematics},
vol.~\bseriesno{550},
pp.~\bfpage{500}--\blpage{523}.
\bpublisher{Springer}
(\byear{1976}).
\bid{mr={0440716}}.
doi:\doiurl{\\10.1007/BFb0077511}
\end{bchapter}
%
%
\OrigBibText
%
\begin{bchapter}
\bauthor{\bsnm{Portenko}, \binits{N.I.}}:
\bctitle{Generalized diffusion processes}.
In: \beditor{\bsnm{Maruyama}, \binits{G.}},
\beditor{\bsnm{Prokhorov}, \binits{J.V.}} (eds.)
\bbtitle{Proceedings of the Third Japan--USSR Symposium on Probability
Theory}.
\bsertitle{Lecture Notes in Mathematics},
vol.~\bseriesno{550},
pp.~\bfpage{500}--\blpage{523}.
\bpublisher{Springer}
(\byear{1976}).
doi:\doiurl{10.1007/BFb0077511}
\end{bchapter}
%
\endOrigBibText
\bptok{structpyb}%
\endbibitem

\bibitem{RevuzYor1999}
%
\begin{bbook}
\bauthor{\bsnm{Revuz}, \binits{D.}},
\bauthor{\bsnm{Yor}, \binits{M.}}:
\bbtitle{Continuous Martingales and {B}rownian Motion}.
\bsertitle{Grundlehren der mathematischen Wissenchaften A series of
comprehensive studies in mathematics}.
\bpublisher{Springer}
(\byear{1999}).
\bid{doi={10.1007/978-3-662-06400-9}, mr={1725357}}
\end{bbook}
%
%
\OrigBibText
%
\begin{bbook}
\bauthor{\bsnm{Revuz}, \binits{D.}},
\bauthor{\bsnm{Yor}, \binits{M.}}:
\bbtitle{Continuous Martingales and {B}rownian Motion}.
\bsertitle{Grundlehren der mathematischen Wissenchaften A series of
comprehensive studies in mathematics}.
\bpublisher{Springer}
(\byear{1999})
\end{bbook}
%
\endOrigBibText
\bptok{structpyb}%
\endbibitem

\bibitem{Skorokhod1965studies}
%
\begin{bbook}
\bauthor{\bsnm{Skorokhod}, \binits{A.V.}}:
\bbtitle{Studies in the Theory of Random Processes}.
\bsertitle{Adiwes International Series in Mathematics}.
\bpublisher{Addison--Wesley Publishing Company}
(\byear{1965}).
\bid{mr={0185620}}
\end{bbook}
%
%
\OrigBibText
%
\begin{bbook}
\bauthor{\bsnm{Skorokhod}, \binits{A.V.}}:
\bbtitle{Studies in the Theory of Random Processes}.
\bsertitle{Adiwes International Series in Mathematics}.
\bpublisher{Addison-Wesley Publishing Company}
(\byear{1965})
\end{bbook}
%
\endOrigBibText
\bptok{structpyb}%
\endbibitem

\end{thebibliography}
\end{document}